\documentclass[12pt]{amsart}

\usepackage[hmargin=2.5cm,bmargin=2.5cm,tmargin=3cm]{geometry}
\usepackage{soul}

\usepackage{amsmath,amssymb,amsthm,amsfonts,enumerate,url}
\usepackage{mathrsfs}
\usepackage{graphicx}
\usepackage{color}
\usepackage{float}

\usepackage{tikz}
\usetikzlibrary{automata,positioning,arrows.meta}

\usepackage[utf8]{inputenc}



\theoremstyle{plain}
\newtheorem{theorem}{Theorem}[section]
\newtheorem{lemma}[theorem]{Lemma}
\newtheorem{proposition}[theorem]{Proposition}
\newtheorem{corollary}[theorem]{Corollary}

\theoremstyle{definition}
\newtheorem{definition}[theorem]{Definition}
\newtheorem{example}[theorem]{Example}

\newtheorem{remark}[theorem]{Remark}

\numberwithin{equation}{section}

\usepackage{hyperref}



\newcommand\Graph{\mathcal{G}}

\newcommand\NewGraph{\widetilde{\mathcal{G}}}
\newcommand\ReflGraph{\widehat{\mathcal{G}}}


\newcommand{\lambN}{\lambda_2^{\mathrm{N}}}
\newcommand{\lambD}{\lambda_1^{\mathrm{D}}}
\newcommand{\lambO}{\lambda_1^{\mathrm{N}}}
\newcommand{\lambAK}{\lambda_1^{\mathrm{S}}}
\newcommand{\GraphD}{\Graph^{\mathrm{D}}}

\newcommand{\VertexSet}{\mV}

\newcommand{\R}{\mathbb{R}}

 \def\mG{\mathsf{G}}

 \def\mV{\mathsf{V}}

 \def\mE{\mathsf{E}}

 \def\mK{\mathsf{K}}

 \def\mv{\mathsf{v}}
 \def\me{\mathsf{e}}
 
 \def\mw{\mathsf{w}}

\DeclareMathOperator{\dist}{dist}

\DeclareMathOperator{\Dom}{Dom}

\newcommand{\nd}[3]{\partial_\nu {#1}|_{#2}(#3)} 


\title[Impediments to diffusion in quantum graphs]{Impediments to
  diffusion in quantum graphs: geometry-based upper bounds on the
  spectral gap}

\subjclass[2010]{34B45 (05C50 35P15 81Q35)}

\keywords{Quantum graphs, Girth, Spectral geometry of quantum graphs, Bounds on spectral gaps}

\author[G.~Berkolaiko]{Gregory Berkolaiko}
\author[J.~B.~Kennedy]{James B.~Kennedy}
\author[P.~Kurasov]{Pavel Kurasov}
\author[D.~Mugnolo]{Delio Mugnolo}

\address{Gregory Berkolaiko, Department of Mathematics, Texas A{\&}M University, College Station, TX 77843-3368, USA}
\email{gberkolaiko@tamu.edu}

\address{James B.~Kennedy, Grupo de F\'isica Matem\'atica, Faculdade de Ci\^encias, Universidade de Lisboa, Campo Grande, Edif\'{i}cio~C6, \mbox{P-1749-016} Lisboa, Portugal}
\email{jbkennedy@fc.ul.pt}

\address{Pavel Kurasov, Department of Mathematics, Stockholm University, SE-106 91 Stockholm, Sweden}
\email{kurasov@math.su.se}

\address{Delio Mugnolo, Lehrgebiet Analysis, Fakult\"at Mathematik und Informatik, Fern\-Univer\-si\-t\"at in Hagen, D-58084 Hagen, Germany}
\email{delio.mugnolo@fernuni-hagen.de}

\date{\today}

\thanks{The work of G.B. was partially supported by the NSF under
 grant DMS--1815075.  J.B.K.~was supported by the Funda{\c{c}}{\~a}o
 para a Ci{\^e}ncia e a Tecnologia, Portugal, via the program
 ``Investigador FCT'', reference IF/01461/2015, and project
 PTDC/MAT-PUR/1788/2020.  P.K. was partially supported by the Swedish
 Research Council (Grant 2020-03780).  D.M. was partially supported
 by the Deutsche Forschungsgemeinschaft (Grant 397230547).  All four
 authors were partially supported by the Center for Interdisciplinary
 Research (ZiF) in Bielefeld, Germany, within the framework of the
 cooperation group on ``Discrete and continuous models in the theory
 of networks''. The contribution of J.B.K., P.K., and D.M.\ is based
 upon work from COST Action 18232 MAT-DYN-NET, supported by COST
 (European Cooperation in Science and Technology),
 \url{www.cost.eu}.}

\begin{document}

\begin{abstract}
  We derive several upper bounds on the spectral gap of the Laplacian
  on compact metric graphs with standard or Dirichlet vertex
  conditions. In particular, we obtain estimates based on the length
  of a shortest cycle (girth), diameter, total length of the graph, as
  well as further metric quantities introduced here for the first
  time, such as the avoidance diameter. Using known results about
  Ramanujan graphs, a class of expander graphs, we also prove that
  some of these metric quantities, or combinations thereof, do not to
  deliver any spectral bounds with the correct scaling.
\end{abstract}

\maketitle

\section{Introduction and statement of the main results}

Quantum graphs, a common term for differential operators in
function spaces defined on metric graphs, often arise as limits or
approximations of physical problems on thin structures.  They have
been used to study evolution of free electrons in molecules
\cite{RueSch53}, light propagation in optical waveguides (both
analytically and experimentally) \cite{LawBiaYun20,LawKurBau20}, heat
and water flow in branched pathways \cite{Mug07,SarCarAnd14}, Brownian
motions in ramified structures \cite{KosPotSch12}, vibration in steel
frames \cite{LagLeuSch94,Mei19,BerEtt21}, and many other applied
questions.

The most well-understood operator is the Laplacian with standard
(alternatively known as Neumann--Kirchhoff) as well as Dirichlet
vertex conditions\footnote{Basic definitions in the theory of
  Laplacian on metric graphs, as well as some results we use in the
  proofs of this paper, are collected in
  Appendix~\ref{sec:definitions_and_results}, where we also fix our
  notation.}.  On a connected graph and with no Dirichlet vertices
present, relaxation time of a diffusive process to equilibrium is
controlled by the first non-zero eigenvalue, which we denote by
$\lambN > \lambO = 0$.  In the presence of at least one Dirichlet
vertex,\footnote{Here and in the following, in a slight abuse of
  notation we will sometimes refer to properties of the (Dirichlet or
  standard) Laplacian as properties of $\Graph$, e.g.\ speak of
  Dirichlet vertices of $\Graph$.} a corresponding role is played by
the first eigenvalue $\lambD>0$, which controls dissipation of heat
from the graph.  These are the eigenvalues that we focus on in this
paper, aiming to give upper bounds in terms of certain metric
characteristics of the graph.

This study is motivated by the following question: what (geo)metric feature
can, on its own, act as an impediment to diffusion? 
To illustrate this question, consider the following.

\begin{example}\label{exa:intro}
  The simple estimates below only depend on the fact that certain
  specific metric graphs are (or are not) embedded in the given
  ambient metric graph $\Graph$ with vertex set $\mV$ and edge lengths
  $\ell_\me$, $\me\in\mE$:
  \begin{enumerate}
  \item If $\ell_{\max}$ is the length of the longest edge of $\Graph$, then
    \begin{equation}
      \label{eq:edge_length_estimate}
      \lambN \leq \frac{4\pi^2}{\ell_{\max}^2}.
    \end{equation}
    For a graph with Dirichlet vertices, the analogous estimate reads
    \begin{equation}
      \label{eq:edge_length_estimateD}
      \lambD \leq \frac{\pi^2}{\ell_{\max}^2}.
    \end{equation}
  \item Assume $\Graph$ contains neither a complete graph on five
    vertices ($\mK_5$), nor a complete bipartite graph on $3+3$
    vertices ($\mK_{3,3}$), as an induced
    subgraph{\footnote{That is, a subgraph formed from a
        subset of the vertices of $\Graph$ and all the edges of $\Graph$
        that connect the vertices in this subset.}}. Then
    \begin{equation}
      \label{eq:plum-plan}
      \lambN \leq \frac{16\pi^2}{L} \max_{\mv\in \mV}\sum_{\me\in\mE_\mv}\frac{1}{\ell_\me},
    \end{equation}
    where $L$ is the total length of $\Graph$ and $\mE_\mv$ is the set of 
    edges incident with $\mv$.
  \item \label{item:pumpkin-blockage} If a {cycle $\mathfrak{c}$}
    is included in $\Graph$ as an induced subgraph, and if { (the 
    closure of) each connected component of $\Graph \setminus \mathfrak{c}$ 
    meets $\mathfrak{c}$ at \emph{exactly one point},} then
    \begin{equation}
      \label{eq:pumpkin-blockage}
      \lambN (\Graph) \leq \frac{4\pi^2}{s^2}
    \end{equation}
    where $s$ denotes the { length of $\mathcal{S}$. Actually, 
    the same result holds if $\mathfrak{c}$ is a so-called \emph{pumpkin graph} 
    (see Figure~\ref{fig:pumpkin} below), and $s$ is the length 
    of the shortest cycle in it}.
  \end{enumerate}

  The first inequality is standard and will be shown in
  Section~\ref{sec:proofs_hybrid} below; inequality
  \eqref{eq:plum-plan} has been proved in~\cite[Theorem~3.11]{Plu21}
  and depends on the fact that, by Kuratowski's Theorem, a graph is
  planar if and only if it does not include any subgraph isomorphic to
  $\mK_5$ or $\mK_{3,3}$ (whereas \cite[Theorem~4.8]{Plu21} suggests
  that metric graphs of higher genus have higher $\lambN$); finally,
  the proof of inequality \eqref{eq:pumpkin-blockage} is based on the
  principle that attaching pendants to a graph lowers its eigenvalues
  (see \cite[Theorem~3.10]{BerKenKur19}), together with an estimate on
  the eigenvalues of the pumpkin graph.  This inequality also has a
  counterpart for higher eigenvalues.
\end{example}

The above examples suggest that having to cross a long edge, or an
``independent'' cycle, retards convergence to equilibrium. We are
going to make this observation more systematic.  Not to be overly
ambitious, we can try to use the length of the \emph{shortest}
cycle\footnote{This length is known in combinatorial graph theory as
  the girth, see also \cite[Section~6]{BerKenKur19}.} in place of
$\ell_{\max}$ in \eqref{eq:edge_length_estimate}: in the case of
standard vertex conditions, it is to be expected that the presence of
a ``minimal'' cycle of a given length should, like a long edge, be an
obstacle to rapid convergence.

In a graph with Dirichlet vertices, which act as heat sinks at any
point at which they are placed, the distance to the nearest Dirichlet
vertex becomes important, and the shortest distance between two such
vertices plays the same role as the minimal cycle length. For this
reason, for the purpose of defining girth (and only for this purpose,
cf.\ Remark~\ref{rem:dummy}), we identify all Dirichlet vertices; this
leads to the following modified definition:

\begin{definition}
  \label{def:dirichlet-girth}
  The \emph{girth} $s=s(\Graph)$ of a compact, connected graph
  $\Graph$ shall be given by
  \begin{displaymath}
    \min \{|\mathfrak{c}| \colon \mathfrak{c} \subset \Graph \text{ is
      a cycle in $\Graph'$} \},
  \end{displaymath}
  where $\Graph'$ is the metric graph obtained from $\Graph$ by
  identifying (or ``gluing together'') all Dirichlet vertices of
  $\Graph$, if any are present.  The girth is defined to be zero if
  $\Graph'$ is a tree.  Here $|\mathcal{H}|$ denotes the total length
  of a given subset $\mathcal{H}$ of $\Graph$.
\end{definition}

\subsection{Estimates based on girth}
\label{sec:walking-alone}

If the graph has at least one Dirichlet vertex, girth by itself is
indeed enough to yield an upper bound.

\begin{theorem}
  \label{thm:dirichlet-girth}
  If a compact, connected graph $\Graph$ has at least one
  Dirichlet vertex, then
  \begin{equation}
    \label{eq:dirichlet-girth}
    \lambD(\Graph) \leq \frac{\pi^2}{s^2}.
  \end{equation}
  Equality is attained if and only if $\Graph$ is an equilateral star
  graph with $n\geq2$ edges of length $s/2$ with Dirichlet conditions
  at all degree one vertices.
\end{theorem}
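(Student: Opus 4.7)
The strategy is to exhibit a test function $u \in H^1_0(\Graph)$ with Rayleigh quotient at most $\pi^2/s^2$ and conclude by the variational principle. First, I would reduce to the case of a single Dirichlet vertex: since every element of $H^1_0(\Graph)$ vanishes on $\mVD$, the identification map $\Graph \to \Graph'$ (obtained by gluing all Dirichlet vertices to a single vertex $\mv^*$) gives a Dirichlet-form preserving isomorphism between $H^1_0(\Graph)$ and the $H^1$-functions on $\Graph'$ vanishing at $\mv^*$. Hence $\lambD(\Graph) = \lambD(\Graph')$, and by Definition~\ref{def:dirichlet-girth}, $s$ is the ordinary shortest-cycle length in $\Graph'$. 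I therefore assume henceforth that $\Graph$ has a unique Dirichlet vertex $\mv^*$ and $s$ is the classical girth.

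Pick a shortest cycle $\mathfrak{c}$ in $\Graph$ of length $s$. I first treat the case when $\mathfrak{c}$ passes through $\mv^*$; the other case reduces to this via the pendant-attachment monotonicity principle of \cite[Theorem~3.10]{BerKenKur19} applied to the connecting path from $\mv^*$ to $\mathfrak{c}$. Parametrize $\mathfrak{c}$ by arc length $\theta \in [0, s]$ with $\mv^*$ at the identification $\theta = 0 \sim \theta = s$, and set $u(\theta) := \sin(\pi\theta/s)$ on $\mathfrak{c}$; this is continuous, vanishes at $\mv^*$, and its Rayleigh quotient restricted to $\mathfrak{c}$ equals $\pi^2/s^2$. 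For each connected component $\mathcal{K}$ of $\Graph \setminus \mathfrak{c}$, attached to $\mathfrak{c}$ at cycle vertices $v_1, \ldots, v_k$ with parameters $\theta_1, \ldots, \theta_k$, extend $u$ as follows: on a pendant component ($k = 1$) set $u \equiv \sin(\pi\theta_1/s)$ constant, contributing zero to the Dirichlet energy and a nonnegative term to the $L^2$-norm; for $k \geq 2$, assign $\theta$-values consistently to all vertices of $\mathcal{K}$ (for instance by linear interpolation along the edges of a spanning tree of $\mathcal{K}$) and set $u := \sin(\pi\theta(\cdot)/s)$ edge by edge.

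The bound $R[u] \leq \pi^2/s^2$ is driven by the girth inequality: any path inside $\mathcal{K}$ between two attachment points $v_i, v_j$ has length at least $s - |\theta_i - \theta_j|$, since otherwise its union with the shorter arc of $\mathfrak{c}$ between $v_i$ and $v_j$ would close a cycle of length less than $s$. A careful edge-by-edge Rayleigh estimate using this inequality then yields the desired bound on $\mathcal{K}$, and summing with the cycle contribution gives $R[u] \leq \pi^2/s^2$. For equality, each inequality above must saturate: pendant components have zero length (so the graph is exhausted by $\mathfrak{c}$ and the multi-attachment components), and the girth inequalities force each internal path length to equal $s/2$ with attachments only at $\mv^*$ and at the antipode in $\mathfrak{c}$. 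Reassembling, $\Graph'$ must be the $n$-pumpkin consisting of $n \geq 2$ parallel edges of length $s/2$ between $\mv^*$ and the antipode, so that $\Graph$ is the equilateral $n$-star with Dirichlet leaves. The main obstacle will be the edge-by-edge Rayleigh estimate on multi-attachment components: a naive sine interpolation may fail for extreme configurations $(\theta_i, \theta_j, \ell_{ij})$, and a refined construction (for example, a harmonic-type extension tuned to the target eigenvalue $\pi^2/s^2$, or a decomposition of $\mathcal{K}$ into simpler subpieces treated separately) may be required to close the argument cleanly.
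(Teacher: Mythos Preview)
Your variational approach is entirely different from the paper's, and the gap you yourself flag is real. The paper does \emph{not} build a test function at all; instead it works with the actual ground state $\psi$ and uses it to carve out a simple subgraph. Starting from a ``lowest serious point'' (a vertex at which $\psi$ has at least two edges with non-negative inward derivative), one follows the two descending directions of $\psi$ until reaching Dirichlet vertices. The resulting subgraph $\NewGraph$ is either a path between two Dirichlet vertices or a tadpole terminating at one, in either case of length at least~$s$ by the girth hypothesis. Cutting $\NewGraph$ out of $\Graph$ and equipping its vertices with $\delta$-potentials chosen so that $\psi|_{\NewGraph}$ remains an eigenfunction, one checks these potentials are non-positive; setting them to zero only raises $\lambda_1$, and the interval or tadpole with zero potentials has $\lambda_1 \le \pi^2/s^2$. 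The equality analysis then falls out from tracking when each of these inequalities is saturated.

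The point is that by working with $\psi$ the paper never has to extend anything across a multi-attachment component: the descending paths are determined by $\psi$ itself and are automatically one-dimensional. By contrast, your extension step is genuinely problematic. Already your proposed ``edge-by-edge Rayleigh estimate'' fails: take equilateral $\mK_4$ with one Dirichlet vertex $\mv^*$ and the triangle through $\mv^*$ as $\mathfrak{c}$; the remaining component is a star centred at the fourth vertex $w_3$, and on the edge $\mv^*w_3$ any admissible interpolant (vanishing at $\mv^*$, equal to $u(w_3)$ at $w_3$) has Rayleigh quotient at least $\pi^2/(4\ell^2) > \pi^2/s^2 = \pi^2/(9\ell^2)$. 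One has to average over the whole component, and making that work for an arbitrary $\mathcal{K}$ with arbitrary attachment values $\sin(\pi\theta_i/s)$ and internal topology is essentially as hard as the theorem itself. Separately, your reduction of the case ``$\mathfrak{c}$ does not pass through $\mv^*$'' via pendant monotonicity is not correct as stated: attaching the connecting path moves the Dirichlet condition from a point on $\mathfrak{c}$ to the far endpoint, which is not covered by \cite[Theorem~3.10]{BerKenKur19}, and in any case the rest of $\Graph$ need not be pendant-attached to $\mathfrak{c}$.
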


(The case of a Dirichlet interval of length $s$ corresponds to $n=2$.)

We believe Theorem~\ref{thm:dirichlet-girth} to be new even in the
case where $\Graph$ is a tree equipped with Dirichlet conditions at
all leaves. This may be contrasted with \cite[Eq.~(1.4)]{Roh17}, which
shows that for a tree graph, the diameter alone is enough to control
$\lambN$.


The result of Theorem~\ref{thm:dirichlet-girth} can also be immediately
extended to graphs with a particular reflection symmetry.  

\begin{corollary}\label{cor:girth-symm-neum}
  Suppose the compact, connected graph $\Graph$ is obtained from two copies
  of another connected graph, $\ReflGraph$, by pairwise gluing of finitely
  many pairs of the duplicated vertices. If $\Graph$ has girth $s$,
  then the spectral gap of the Laplacian with standard vertex
  conditions satisfies
  \begin{equation}
    \label{eq:waist-a-wish}
    \lambN(\Graph) \leq \frac{4\pi^2}{s^2}.
  \end{equation}
  Equality is attained if $\Graph$ is an equilateral pumpkin
  (watermelon) graph { (see Figure~\ref{fig:pumpkin}).}
\end{corollary}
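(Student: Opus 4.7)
The approach is to exploit the reflection $R\colon\Graph\to\Graph$ interchanging the two copies of $\ReflGraph$ in $\Graph$, reducing the bound on $\lambN(\Graph)$ to a Dirichlet eigenvalue problem on $\ReflGraph$ to which Theorem~\ref{thm:dirichlet-girth} applies. The fixed point set of $R$ is precisely the (finite) set of duplicated vertices $\mf_1,\ldots,\mf_k$ along which the two copies are glued. The standard Laplacian commutes with $R$, so $L^2(\Graph)$ decomposes into symmetric and antisymmetric sectors; the constant function (the eigenfunction for $\lambO=0$) lies in the symmetric sector, so $\lambN(\Graph)$ is at most the ground state energy of the antisymmetric sector. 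Any antisymmetric eigenfunction must vanish at every $\mf_i$, and the standard check (pairwise cancellation of the two copies' derivatives ensures that the standard condition across each $\mf_i$ is satisfied automatically under odd extension) yields a bijection between antisymmetric eigenfunctions of $\Graph$ and eigenfunctions on $\ReflGraph$ with standard conditions at the non-duplicated vertices and Dirichlet conditions at $\mf_1,\ldots,\mf_k$. Consequently
\begin{equation*}
\lambN(\Graph) \;\leq\; \lambD\bigl(\ReflGraph\text{ with Dirichlet at } \mf_1,\ldots,\mf_k\bigr).
\end{equation*}

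Next I would carry out a girth comparison. Let $s'$ denote the girth of $\ReflGraph$ in the sense of Definition~\ref{def:dirichlet-girth}, i.e.\ the length of a shortest cycle in the quotient $\ReflGraph'$ obtained by collapsing the $\mf_i$ to a single point $*$. The key claim is $s'\geq s/2$. A shortest cycle $C'$ in $\ReflGraph'$ is simple and therefore visits $*$ at most once. If $C'$ avoids $*$, it is already a cycle in $\ReflGraph\subset\Graph$, whence $s\leq s'$. Otherwise $C'$ lifts to a simple path $P$ in $\ReflGraph$ with endpoints in $\{\mf_1,\ldots,\mf_k\}$ and no other duplicated vertex in its interior (again by simplicity of $C'$); if the endpoints coincide then $P$ is itself a cycle in $\Graph$ of length $s'$, otherwise $P\cup R(P)$ is a simple cycle in $\Graph$ of length $2s'$, the two arcs meeting only at the fixed endpoints. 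In every case $s\leq 2s'$, so Theorem~\ref{thm:dirichlet-girth} applied on $\ReflGraph$ gives
\begin{equation*}
\lambN(\Graph) \;\leq\; \lambD(\ReflGraph) \;\leq\; \frac{\pi^2}{(s')^2} \;\leq\; \frac{4\pi^2}{s^2}.
\end{equation*}

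For the equality statement, take $\ReflGraph$ to be the equilateral $n$-star with pendant edges of length $s/4$ and duplicate at all $n$ leaves: the gluing reproduces, after suppressing the artificial degree-two vertices introduced at the edge midpoints, the equilateral $n$-pumpkin of edge length $s/2$ (hence girth $s$), and the quotient $\ReflGraph'$ realises the equality case of Theorem~\ref{thm:dirichlet-girth} with girth $s/2$, giving $\lambD(\ReflGraph)=4\pi^2/s^2$, which matches $\lambN$ of the pumpkin by a short symmetry-based computation. The step I expect to require the most care is the girth comparison $s'\geq s/2$: one has to correctly unfold a shortest cycle in the quotient $\ReflGraph'$ into a cycle in $\Graph$ of at most twice the length, checking that simplicity is preserved — which relies precisely on the fact that the lifted path $P$ has no interior duplicated vertex.
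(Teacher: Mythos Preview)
Your proposal is correct and follows essentially the same approach as the paper: decompose $L^2(\Graph)$ into symmetric and antisymmetric sectors under the reflection, bound $\lambN(\Graph)$ by the ground state of the antisymmetric sector, identify the latter with $\lambD(\ReflGraph)$ (Dirichlet at the glued vertices), and invoke Theorem~\ref{thm:dirichlet-girth} together with the girth comparison $s'\geq s/2$. The paper simply asserts the girth comparison, whereas you spell out the case analysis (cycle avoiding $*$ versus cycle through $*$, lifted path doubled via $R$); your treatment of the equality case via the star--pumpkin correspondence is also more explicit than the paper's, which merely states the pumpkin attains equality.
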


{
\begin{figure}[H]
\centering
\begin{tikzpicture}[scale=0.75]
\coordinate (a) at (-2.5,0);
\coordinate (b) at (2.5,0);
\draw[thick] (a) -- (b);
\draw[thick,bend right]  (a) edge (b);
\draw[thick,bend left=70]  (a) edge (b);
\draw[thick,bend right=70]  (a) edge (b);
\draw[thick,bend left]  (a) edge (b);
\draw[fill] (a) circle (1.75pt);
\draw[fill] (b) circle (1.75pt);
\end{tikzpicture}
\caption{An \emph{$m$-pumpkin graph} (a.k.a.\ \emph{$m$-watermelon graph}) 
consists of some number $m\geq 2$ of parallel edges stretched between two vertices; 
here $m=5$. It is equilateral if all edges have the same length.}
\label{fig:pumpkin}
\end{figure}
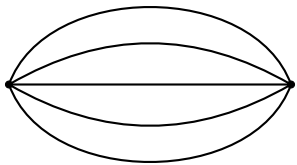
}

Note that $\Graph$ does not have any Dirichlet vertices, and so its
girth coincides with the length of its shortest cycle, which either
comes from a cycle in $\ReflGraph$ or from a cycle created in the
gluing process.  Figure~\ref{fig:reflection_graphs} presents an
example of $\ReflGraph$ covered by
Corollary~\ref{cor:girth-symm-neum}, and examples of symmetric graphs
which are not covered.

\begin{figure}[t]
  \centering
  \includegraphics[scale=0.75]{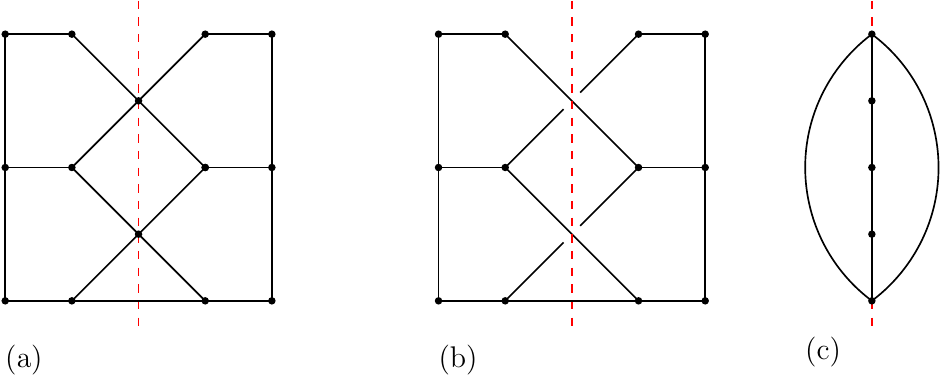}
  \caption{(a) An example of a graph that has the symmetry sufficient
    for \eqref{eq:waist-a-wish} to hold (note that we can place a
    dummy vertex in the middle of the lowest edge). (b) A graph with a
    reflection symmetry that cannot be obtained by gluing pairs of
    duplicated vertices. (c) A graph with a symmetry that would
    require gluing along entire edges.}
  \label{fig:reflection_graphs}
\end{figure}

Even without such symmetry, one could reasonably expect to use bound
\eqref{eq:dirichlet-girth} on each of the nodal domains of the second
standard Laplacian eigenfunction (generically, there are exactly two!)
to obtain \eqref{eq:waist-a-wish} for a general graph.  Furthermore,
\eqref{eq:waist-a-wish} can be shown to hold for many other classes of
``highly connected'' graphs, including \emph{pumpkin chains} { 
(finite sequences of pumpkin graphs glued together at their vertices to form 
a chain)}, equilateral complete graphs on at least three edges (see
\cite[Section~3]{KenKurMal16}) and also any graphs which 
contain an ``independent'' cycle, see
Example~\ref{exa:intro}(\ref{item:pumpkin-blockage}). 

It is thus all the more surprising that the girth alone is \emph{not} always 
enough to bound $\lambN$ from above, as the following examples show.

\begin{example}
  \label{ex:no-girth}
  We first provide a counterexample to the validity of
  \eqref{eq:waist-a-wish} with the symmetry restriction dropped. We consider Tutte's
  12-cage, a 3-regular graph on 126 vertices with girth 12 (cf., e.g.,
  \cite[p.~283]{ConMalMar06}): the second largest eigenvalue $\nu_2$
  of its adjacency matrix is known to be $\sqrt{6}$. A formula of von Below
  \cite[Theorem, p.~320]{Bel85}, which relates the eigenvalues of an
  equilateral metric graph to the eigenvalues of the normalized
  Laplacian, can be adapted to the eigenvalues of
  the adjacency matrix in the $k$-regular case, giving
  \begin{equation}
    \label{eq:vonbel}
    \lambN (\Graph)=\big(\arccos(\nu_2/k)\big)^2.
  \end{equation}
  We conclude that the spectral gap of the equilateral quantum graph
  built upon Tutte's 12-cage is $\big(\arccos(\sqrt{6}/3)\big)^2\approx 0.379$,
  whereas the right-hand side of~\eqref{eq:waist-a-wish} is
  $4\pi^2/144 \approx 0.274$.

  We remark that the automorphism group of Tutte's 12-cage is known to
  be a semi-direct product of PSU$(3,3)$ with the cyclic group 
  $\mathbf{Z}_2$; in particular the 12-cage graph does have
  a reflection symmetry.  It is far from obvious that this symmetry
  does not satisfy the assumptions of
  Corollary~\ref{cor:girth-symm-neum}.
\end{example}

\begin{example}
  \label{ex:LPSexample}
  Basic scaling arguments show that if an estimate of the form
  $\lambN(\Graph) \leq C s^\alpha$ is to exist, the power $\alpha$
  must be equal to $-2$.  We now show that such estimate is in fact
  impossible for any $C$. A counterexample is given by the class of
  equilateral metric graphs built out of combinatorial $k$-regular
  graphs known as Ramanujan graphs, which were introduced
  in~\cite{LubPhiSar88}. The second largest adjacency matrix
  eigenvalue $\nu_2$ of a Ramanujan graph is, by definition, no larger
  that $2\sqrt{k-1}$ (the largest one always being $k$, since the
  graph is $k$-regular), therefore
  \begin{equation}
    \label{eq:metricLPS}
   \lambN(\Graph) \geq \big(\arccos(2\sqrt{k-1}/k)\big)^2,
  \end{equation}
  again by \eqref{eq:vonbel}.
  It is established in~\cite{LubPhiSar88,BigBos90}, that for
  infinitely many values of $k$, Ramanujan graphs on arbitrarily many
  vertices $|\mV|$ can be constructed and that the asymptotic equality
  \begin{equation}
    \label{eq:girthLPS}
    s\sim \frac{4}{3}\log_{k-1}(|\mV|)
    \qquad \hbox{as }|\mV|\to\infty
  \end{equation}
  holds for these graphs. Combining \eqref{eq:metricLPS}
  and~\eqref{eq:girthLPS} we see that the estimate
  $\lambN(\Graph) = {\mathcal O}(s^{-2})$ cannot hold in general,  
  and thus no upper bound on $\lambN$ in terms of girth alone is possible.
\end{example}

\begin{remark}
  \label{rem:MSS_graphs}
  A more recent construction of Ramanujan graphs uses the theorem of
  \cite{MarSpiSri15} which, expressed in terms of metric graphs,
  states that any equilateral $k$-regular graph $\Graph$ has a
  \emph{signing} $\Graph_S$ such that its lowest eigenvalue satisfies
  \begin{equation}
    \label{eq:metricRamanujan}
   \lambAK(\Graph_S) \geq \big(\arccos(2\sqrt{k-1}/k)\big)^2.
  \end{equation}
  A signing is, in this context, a choice of edges $\{\me_j\}$ on
  which we impose anti-periodic conditions
  (see~\eqref{eq:AK_conditions_def} and
  Remark~\ref{rem:gauge_invariance}).  Let $\psi$ be the eigenfunction
  corresponding to $\lambAK$ of a graph $\Graph_S$ satisfying
  \eqref{eq:metricRamanujan}.  It is easy to see that the zeros of the
  function $\psi$ are located exactly on the anti-periodic cycles of
  the graph $\Graph_S$, i.e.\ the cycles on which an odd number of
  anti-periodic conditions have been imposed.  Imposing Dirichlet
  conditions at the locations of the zeros we obtain a graph $\GraphD$
  with only standard and Dirichlet conditions (by gauge invariance).
  Then $|\psi|$ is a non-negative eigenfunction of $\GraphD$ and thus
  the ground state (cf.\ Theorem~\ref{thm:changing_vc}); hence
  $\lambD(\GraphD)=\lambAK(\Graph_S)$.  The girth $s$ of $\GraphD$ is
  equal to the least of the two quantities: the length of the shortest
  cycle of $\Graph$ and the shortest distance between two zeros of
  $\psi$.  Theorem~\ref{thm:dirichlet-girth} then implies the estimate
  \begin{equation}
    \label{eq:girth_estimate_Ramanujan}
    s \leq \frac{\pi}{\arccos(2\sqrt{k-1}/k)}.
  \end{equation}
  The right-hand side is less than $10$ when $k=3$ and less than $3$
  when $k \geq 15$.  Intuitively, \eqref{eq:girth_estimate_Ramanujan}
  shows that the signing producing \eqref{eq:metricRamanujan} on a
  graph with large girth must be fairly dense (to guarantee a dense
  set of zeros of $\psi$).
\end{remark}

\subsection{Estimates based on total length and one further metric quantity}
\label{sec:hybrid}

Having seen that the ``single metric quantity estimates'' of the type
given in Theorem~\ref{thm:dirichlet-girth} are rather subtle, we will
now present some estimates that use a combination of two metric
quantities while having the correct overall scaling
$(\text{length})^{-2}$. For similar estimates involving other
quantities, we refer, e.g., to
\cite{BanLev17,BerKenKur19,BorCorJon21,KosNic19,Plu21,RohSei20} and
the references therein.

\begin{proposition}
  \label{thm:test-of-girth}
  If the compact, connected metric graph $\Graph$ has
  girth $s$ and total length $L$, then the spectral gap of the
  Laplacian with standard vertex conditions satisfies
  \begin{equation}
    \label{eq:girth-bound}
    \lambN (\Graph) < \frac{48L}{s^3}.
  \end{equation}
\end{proposition}

We remark that $s$ together with $L$ are also sufficient to bound
$\lambN$ from below, too \cite[Corollary~6.7]{BerKenKur19}.  The idea
behind estimate~\eqref{eq:girth-bound} (explained in
Section~\ref{sec:proofs_hybrid}) is to use a homotopy between two test
functions built around a minimal cycle; this idea readily generalizes
to combinations of $L$ with other metric quantities.

Recall that the \textit{diameter} $D$ of a metric graph $\Graph$ is
the maximal distance between any two points on the graph,
\begin{equation}
  \label{eq:diameter_def}
  D = \max_{x_1,x_2 \in \Graph} \dist(x_1,x_2).
\end{equation}

\begin{theorem}
  \label{thm:test-of-diameter}
  If the compact, connected metric graph $\Graph$ has
  diameter $D$ and total length $L$, then the spectral gap of the
  Laplacian with standard vertex
  conditions satisfies
  \begin{equation}
    \label{eq:diam-bound}
    \lambN (\Graph) < \frac{24L}{D^3}.
  \end{equation}
\end{theorem}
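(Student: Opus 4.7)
The plan is to apply Courant--Fischer with a single test function built from the distance function to one endpoint of a diameter-realizing geodesic, and then to lower-bound the variance of this test function by a coarea reduction to a one-dimensional problem on $[0,D]$. Concretely, pick $x_0,x_D\in\Graph$ with $\dist(x_0,x_D)=D$ and let $\gamma\colon[0,D]\to\Graph$ be a shortest path between them. The key function is $\pi(x):=\dist(x,x_0)$: being $1$-Lipschitz it lies in $H^1(\Graph)$, and on each edge it is piecewise affine with slope $\pm1$ (with at most one local maximum, at a cut point), so $|\pi'|^2\equiv 1$ almost everywhere and $\int_\Graph|\pi'|^2\,dx=L$. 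Taking $\phi:=\pi-\bar\pi$ with $\bar\pi:=L^{-1}\int_\Graph\pi$ as a test function, which is automatically orthogonal to the constant ground state, the min-max principle gives
\[
\lambN(\Graph)\leq\frac{\int_\Graph|\phi'|^2}{\int_\Graph\phi^2}=\frac{L}{\var(\pi)}.
\]

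The heart of the argument is lower-bounding $\var(\pi)$. The coarea formula yields, for every continuous $g$,
\[
\int_\Graph g(\pi(x))\,dx=\int_0^D g(t)\,N(t)\,dt,\qquad N(t):=\#\pi^{-1}(t),
\]
so $\var(\pi)=\int_0^D(t-\bar\pi)^2 N(t)\,dt$. Two facts about $N$ are crucial: $\int_0^D N = L$, and $N(t)\geq 1$ for every $t\in[0,D]$ since the geodesic $\gamma$ provides at least one preimage to each such value. Hence $\var(\pi)$ is bounded below by the infimum of $\int_0^D(t-\bar t)^2\rho(t)\,dt$ over all densities $\rho\geq 1$ on $[0,D]$ with $\int\rho=L$, where $\bar t$ is the $\rho$-weighted mean. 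Writing $\rho=1+\sigma$ with $\sigma\geq 0$ and $M:=\int\sigma$, multiplying through by $L=D+M$ to clear denominators, and applying Jensen's inequality to the probability measure $\sigma/M$ reduces the desired bound $\var(\pi)\geq D^3/12$ to the manifestly nonnegative expression $MD(\hat m-D/2)^2\geq 0$, where $\hat m$ is the mean of $\sigma$. Combining with the previous step yields $\lambN(\Graph)\leq 12L/D^3<24L/D^3$.

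The main obstacle is the constrained variance minimization, but it is essentially one-dimensional and elementary once the coarea reduction has been performed. The only subtlety worth double-checking is that $|\pi'|=1$ a.e.\ on a quantum graph: distances to $x_0$ are realized by edge-paths, so on each edge $\pi$ is the pointwise minimum of two affine functions of slope $+1$ and $-1$ respectively, giving slope $\pm 1$ off an isolated local maximum---hence $|\pi'|=1$ almost everywhere. Note in passing that this approach delivers the sharper constant $12$ in place of the stated $24$, so the strict inequality in the theorem follows comfortably; the looser constant in the statement is presumably an artifact of the authors' preference for a homotopy argument parallel to the one used for the girth-based Theorem~\ref{thm:test-of-girth}.
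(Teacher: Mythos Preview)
Your argument is correct and yields the sharper inequality $\lambN(\Graph) < 12L/D^3$, which implies the stated bound with room to spare. The one place where you work harder than necessary is the variance minimization: once you have $N\geq 1$ on $[0,D]$, the inequality
\[
\var(\pi)=\int_0^D (t-\bar\pi)^2 N(t)\,dt \;\geq\; \int_0^D (t-\bar\pi)^2\,dt \;\geq\; \min_{c\in\R}\int_0^D (t-c)^2\,dt \;=\; \frac{D^3}{12}
\]
follows immediately, without Jensen or the substitution $\rho=1+\sigma$. Strictness is clear because a nontrivial piecewise affine function cannot be an eigenfunction for a positive eigenvalue.

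Your route is genuinely different from the paper's. The paper does \emph{not} use the global distance function; instead it builds two disjointly supported ``tent'' functions $\tau_{x_i,D/2}(x)=\big(D/2-\dist(x,x_i)\big)_+$ centered at the diameter endpoints and runs a one-parameter homotopy $\psi_t=\cos(\pi t)\,\tau_{x_1,D/2}+\sin(\pi t)\,\tau_{x_2,D/2}$ between $\psi_0$ and $-\psi_0$, invoking an Intermediate Value argument (their ``Homotopy Lemma'') to locate a zero-mean member of the family. Each tent has $L^2$-norm at least $(D/2)^3/3=D^3/24$ by integration along the diameter geodesic, whence the constant $24$. Your single-test-function approach gains a factor of two because the distance function extends across the whole diameter path rather than two half-length tents, and because centering by the mean is optimal for the variational principle. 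The trade-off is that the paper's homotopy machinery is modular: exactly the same lemma, with tents replaced or reparametrized, yields the companion bounds for girth, triameter, and avoidance diameter (Theorems~\ref{thm:test-of-girth}, \ref{thm:test-of-triameter}, \ref{thm:avoidance_estimate}), whereas your coarea reduction is specific to the diameter.
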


\begin{remark}
  \begin{enumerate}
  \item It has been known since \cite{KenKurMal16} that for a general
    $\Graph$ one cannot bound $\lambN$ in terms of $D$ alone.  The
    same work \cite[Theorem~7.1]{KenKurMal16} established the
    estimate
    \begin{equation}
      \label{eq:KKMM_LD_bound}
      \lambN(\Graph) \leq \frac{4\pi^2}{D^2}\left(\frac{L}{D} -
        \frac34\right).
    \end{equation}
    We also mention a generalization of Rohleder's
    diameter-based estimate~\cite{Roh17}, observed
    in~\cite{DufKenMug22}, namely
    \begin{equation}
      \label{eq:Rohleder_beta}
      \lambN(\Graph) \leq
      \frac{4\pi^2}{D^2}\left(\frac{1+\beta}2\right)^2,
    \end{equation}
    where $\beta$ is the first Betti number, i.e.,
    $\beta := |\mE| - |\mV| + 1$.  One can easily construct examples which
    show that none of the estimates \eqref{eq:girth-bound},
    \eqref{eq:diam-bound}, \eqref{eq:KKMM_LD_bound} or
    \eqref{eq:Rohleder_beta} is implied by a combination of the
    others.
  \item The Ramanujan graphs encountered in
    Example~\ref{ex:LPSexample} also show that we cannot in general
    improve \eqref{eq:girth-bound} to
    $\lambN(\Graph)\lesssim D / s^{3}$. The conclusion is
    obtained by combining the girth estimate~\eqref{eq:girthLPS} with
    the estimate $D \leq (2+\epsilon)\log_{k-1}(|\mV|)$ established in
    \cite{LubPhiSar88} (see also \cite{Sar19} for more precise
    diameter asymptotics in subfamilies of the LPS Ramanujan graphs).
  \end{enumerate}
\end{remark}

We now introduce some generalizations of the diameter, which
``interpolate'' between $D$ and $s$ (see \eqref{eq:ordering} and
Table~\ref{tab:metrics_comparison}).  We have not seen these
generalization in the existing literature, but they arise naturally
from the homotopy argument.  First, the \emph{triameter} $T$ of a
metric graph $\Graph$ is 
\begin{equation}
  \label{eq:triameter_def}
  T := \max_{x_1,x_2,x_3 \in \Graph} \min_{j\neq k} \, \dist(x_j,x_k),
\end{equation}
i.e.\ the maximal pairwise separation among any \emph{three} points on
$\Graph$.

\begin{theorem}
  \label{thm:test-of-triameter}
  If the compact, connected graph $\Graph$ has triameter $T$ and total
  length $L$, then the spectral gap of the Laplacian with standard
  vertex conditions satisfies
  \begin{equation}
    \label{eq:triam-bound}
    \lambN (\Graph) < \frac{12 L}{T^3}.
  \end{equation}
\end{theorem}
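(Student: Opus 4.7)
The plan is to use the variational characterisation $\lambN \leq \|\phi'\|^2/\|\phi\|^2$ valid for any $\phi \in H^1(\Graph)$ with $\int_\Graph \phi = 0$, applied to a linear combination of three disjointly supported hat functions centred at points realising the triameter. This extends the strategy used to prove the diameter bound (Theorem~\ref{thm:test-of-diameter}) from two points to three, and the additional degree of freedom is what delivers the halving of the constant from $24$ to $12$.

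First I would fix three points $x_1, x_2, x_3 \in \Graph$ with $\dist(x_i, x_j) \geq T$ for $i \neq j$, and set
\[
  h_i(x) := \max\bigl(0,\; T/2 - \dist(x, x_i)\bigr), \qquad B_i := B(x_i, T/2),
\]
so that each $h_i$ is supported in $B_i$ and $|h_i'| = 1$ a.e.\ on $B_i$. The triameter condition makes $B_1, B_2, B_3$ pairwise disjoint. Write $I_i := \int_\Graph h_i > 0$, $A_i := \|h_i\|^2$, $V_i := |B_i| = \|h_i'\|^2$, and $R_i := V_i/A_i$. For each pair $i \neq j$ the test function $\phi_{ij} := I_j h_i - I_i h_j$ has zero mean, and disjoint supports make its Rayleigh quotient a convex combination of $R_i$ and $R_j$, hence at most $\max(R_i, R_j)$. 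Optimising over the three pairs yields
\[
  \lambN \;\leq\; \min_{i < j} \max(R_i, R_j) \;=\; \operatorname{median}(R_1, R_2, R_3).
\]

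Next, I would estimate each side. Disjointness gives $V_1 + V_2 + V_3 \leq L$, so the median of the $V_i$ is at most $L/2$. For the $A_i$, the coarea formula yields
\[
  A_i \;=\; 2 \int_0^{T/2} (T/2 - r)\, \mu_i(r)\, dr, \qquad \mu_i(r) := |B(x_i, r)|,
\]
and the key geometric input is the bound $\mu_i(r) \geq r$ for every $r \leq T/2$: since the eccentricity of $x_i$ is at least $T$ (witnessed by the other two triameter points), a geodesic segment of length $r$ emanating from $x_i$ fits inside $B(x_i, r)$ and contributes $r$ to $\mu_i(r)$. Substituting and integrating gives $A_i \geq T^3/24$, whence $R_i \leq 24 V_i / T^3$. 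Combining this with the elementary observation that a pointwise inequality $a_i \leq b_i$ is preserved after sorting (in particular at the median) yields
\[
  \operatorname{median}(R_i) \;\leq\; \frac{24}{T^3}\, \operatorname{median}(V_i) \;\leq\; \frac{24}{T^3} \cdot \frac{L}{2} \;=\; \frac{12L}{T^3}.
\]

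Strict inequality then follows because the piecewise-linear $\phi_{ij}$ cannot be an eigenfunction of the Laplacian. The main obstacle I foresee is establishing the uniform volume-growth estimate $\mu_i(r) \geq r$, which hinges on the triameter-based eccentricity bound; once this lemma is in place, the rest is essentially bookkeeping, with the one subtle point being the combinatorial argument that three disjointly supported test functions and a single orthogonality constraint produce a bound controlled by the \emph{median} (rather than the maximum) of the three individual Rayleigh ratios, which accounts for the factor-of-two improvement over Theorem~\ref{thm:test-of-diameter}.
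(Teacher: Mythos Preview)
Your proof is correct and reaches the same constant, but the route differs from the paper's. The paper invokes the Homotopy Lemma (Corollary~\ref{cor:homotopy}): it builds a continuous family $\psi_t = h_1(t)\,\tau_{x_1,d} + h_2(t)\,\tau_{x_2,d} + h_3(t)\,\tau_{x_3,d}$ with $\psi_1=-\psi_0$, chooses the scalar functions $h_j$ so that at every $t$ at most one of them differs from $\pm1$, and then bounds the Rayleigh quotient uniformly in $t$ by $3L\max_j h_j(t)^2 \big/ \big(d^3 \sum_j h_j(t)^2\big) \leq 3L/(2d^3)$. You bypass the homotopy entirely: for each pair $(i,j)$ you write down the explicit zero-mean combination $I_j h_i - I_i h_j$, observe that its Rayleigh quotient is a convex combination of $R_i$ and $R_j$, and then use the combinatorial fact that the best of the three pairs is controlled by the \emph{median} of $(R_1,R_2,R_3)$; the sorting lemma together with $\operatorname{median}(V_i)\leq L/2$ then delivers the same $12L/T^3$. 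Your argument is more elementary (no intermediate value theorem needed) and makes the factor-of-two improvement over the diameter bound combinatorially transparent; the paper's homotopy framework, on the other hand, is what unifies the proofs of Theorems~\ref{thm:test-of-diameter}--\ref{thm:avoidance_estimate} under a single mechanism. The obstacle you flag about $\mu_i(r)\geq r$ is indeed immediate from the triameter hypothesis: since $\dist(x_i,x_j)\geq T$ for $j\neq i$, a geodesic from $x_i$ towards $x_j$ of length $r\leq T/2$ lies entirely in $B(x_i,r)$.
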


Second, we introduce the ``avoidance diameter'', which, intuitively,
measures how far apart two points can remain while exchanging places.
Let $\mathbb{S}^1$ be the unit circle in $\mathbb{C}$ and let $\Gamma$
denote the class of injective continuous maps from $\mathbb{S}^1$ to
$\Graph$.  Then the \emph{avoidance diameter} is defined as
\begin{equation}
  \label{eq:avoidance_diam_def}
  A := \max_{\gamma \in \Gamma} \min_{t\in\mathbb{S}^1}
  \dist\big(\gamma(-t), \gamma(t)\big).
\end{equation}
In the case of trees, $\Gamma=\emptyset$, and we set $A=0$.

\begin{theorem}
  \label{thm:avoidance_estimate}
  If the compact, connected graph $\Graph$ has avoidance diameter $A$
  and total length $L$, then
  \begin{equation}
    \label{eq:avoidance_estimate}
    \lambN(\Graph) < \frac{6 L}{A^3}.
  \end{equation}
\end{theorem}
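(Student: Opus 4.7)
The plan is to adapt the homotopy–based test function strategy used for Theorems~\ref{thm:test-of-diameter} and~\ref{thm:test-of-triameter}, now parametrised by an embedded loop rather than a finite tuple of points. I first fix an injective continuous $\gamma\colon\mathbb{S}^{1}\to\Graph$ that attains (or, by approximation, nearly attains) the avoidance diameter, so that $\dist(\gamma(x),\gamma(-x))\ge A$ for every $x\in\mathbb{S}^{1}$. Writing $C:=\gamma(\mathbb{S}^{1})$ for the image, $C$ is a topological circle in $\Graph$, and since each of the two $\gamma$-arcs joining $\gamma(x)$ to $\gamma(-x)$ is a path in $\Graph$ between these points, each has length at least $A$; hence $|C|\ge 2A$.

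For each $x\in\mathbb{S}^{1}$ I use the antisymmetric pair of hats
\[
  \varphi_{x}(y):=\bigl(A/2-\dist(y,\gamma(x))\bigr)_{+}-\bigl(A/2-\dist(y,\gamma(-x))\bigr)_{+},
\]
whose two summands have disjoint supports by the triangle inequality combined with $\dist(\gamma(x),\gamma(-x))\ge A$. Since $\varphi_{-x}=-\varphi_{x}$, the continuous map $F\colon x\mapsto\int_{\Graph}\varphi_{x}$ is odd on $\mathbb{S}^{1}$ and must therefore vanish at some $x_{0}\in\mathbb{S}^{1}$ by connectedness. Thus $\varphi_{x_{0}}\in H^{1}(\Graph)$ is orthogonal to the constants and is admissible in the min--max characterization of $\lambN$.

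The Rayleigh quotient of $\varphi_{x_{0}}$ is then estimated in the same spirit as in the previous theorems. The numerator satisfies $\int_{\Graph}|\varphi_{x_{0}}'|^{2}\le L$, since $|\varphi_{x_{0}}'|\le 1$ almost everywhere on the union of the two disjoint supports and this union lies in $\Graph$. For the denominator I exploit the loop: from $p:=\gamma(x_{0})$ the bound $|C|\ge 2A$ lets one traverse $C$ for arclength $A/2$ in each of the two directions without overlap, and along these arcs $\dist(\cdot,p)$ is bounded above by the arclength $r$ from $p$. Restricting $\int_{\Graph}$ to $C$ therefore gives
\[
  \int_{\Graph}\bigl(A/2-\dist(y,p)\bigr)_{+}^{2}\,dy\;\ge\;2\int_{0}^{A/2}(A/2-r)^{2}\,dr\;=\;\frac{A^{3}}{12},
\]
and the analogous bound around $\gamma(-x_{0})$, so by disjointness of supports $\int_{\Graph}\varphi_{x_{0}}^{2}\ge A^{3}/6$. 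Putting the two estimates together yields $\lambN(\Graph)\le 6L/A^{3}$, and strict inequality follows because the continuous piecewise linear function $\varphi_{x_{0}}$ cannot satisfy the eigenvalue equation $-u''=\lambda u$ edge by edge and is therefore not an eigenfunction.

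The obstacles I expect are mostly technical. First, the existence of a genuine maximiser of the avoidance diameter is not obvious, but it is harmless: running the argument with an $\epsilon$-near-maximiser produces $\lambN\le 6L/(A-\epsilon)^{3}$ for every $\epsilon>0$. Second, and more delicate, one must verify cleanly that the two arcs emanating from $p$ along $C$ indeed stay inside the positive hat's support and do not stray into the negative hat's support; this is precisely where the bound $|C|\ge 2A$ is combined with the disjointness of the balls $B(\gamma(x_{0}),A/2)$ and $B(\gamma(-x_{0}),A/2)$. Everything else reduces to routine Rayleigh-quotient arithmetic.
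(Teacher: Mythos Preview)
Your proposal is correct and essentially identical to the paper's own proof: both use the antisymmetric pair of tent functions $\tau_{\gamma(x),A/2}-\tau_{\gamma(-x),A/2}$ centered at antipodal images of an optimal $\gamma$, invoke the homotopy (odd-function) argument to find a mean-zero test function, bound the numerator by $L$ via $|\varphi'|\le1$, and bound the denominator below by integrating along the injective curve to obtain $A^{3}/6$. Your discussion of near-maximisers and of the arcs staying inside the correct support is more explicit than the paper's, but the underlying mechanism is the same.
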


\begin{remark}
  \label{rem:metrics_comparison}
  The metric quantities used in this section satisfy
  \begin{equation}
    \label{eq:ordering}
    \frac{s}{2} \leq A \leq D
    \qquad \text{and} \qquad
    \frac{s}{3} \leq T \leq D.
  \end{equation}
  Proposition~\ref{thm:test-of-girth} is now an immediate corollary of
  Theorem~\ref{thm:avoidance_estimate}.
\end{remark}

\begin{example}
For each inequality in \eqref{eq:ordering} one can construct examples
of metric graphs where that inequality is strict (see Table \ref{tab:metrics_comparison}).

\begin{table}[ht]
  \centering
  \begin{tabular}{l|c|c|c|c|c}
    $\Graph$ & $s$ & $A$  & $T$ & $D$ & Best estimate \\
    \hline 
    \hbox{path graph} & 0 & 0 & $\frac{L}{2}$ & $L$ &
    \eqref{eq:diam-bound}: $D$ \\[1.1pt] 
    \hline 
    \hbox{equilateral figure-8 graph} & $\frac{L}{2}$ & $\frac{L}{4}$ & $\frac{L}{4}$ & $\frac{L}{2}$ &
    \eqref{eq:diam-bound}: $D$ \\[1.1pt] 
    \hline 
    \hbox{figure-8 graph with lengths $\ell_1 > \ell_2 \geq \ell_1/2$}
    & $\ell_2$ & $\frac{\ell_1}2$
    & $\frac{\ell_1}2$ & $\frac{L}{2}$ &
    \eqref{eq:avoidance_estimate}: $A$ \\[1.1pt] 
    \hline 
    \hbox{equilateral flower graph on $k$ edges, $k\ge 3$} &
    $\frac{L}{k}$ & $\frac{L}{2k}$ & $\frac{L}{k}$ & $\frac{L}{k}$
    & \eqref{eq:triam-bound}: $T$ \\[1.1pt] 
    \hline 
    \hbox{equilateral star graph on $k$ edges, $k\ge 3$} & 0& 0
    & $\frac{2L}{k}$ & $\frac{2L}{k}$
    & \eqref{eq:triam-bound}: $T$ \\[1.1pt] 
    \hline 
    \hbox{equilateral pumpkin graph on $k$ edges, $k\ge 3$} &
    $\frac{2L}{k}$ & $\frac{L}{k}$ & $\frac{L}{k}$ & $\frac{L}{k}$
    & \eqref{eq:girth-bound} and \eqref{eq:avoidance_estimate}: $s$,
    $A$ \\ 
  \end{tabular} \\[4pt]
  \caption{A comparison of the various metric quantities defined in
    Section~\ref{sec:hybrid}.  We also show which estimate among those
  presented in Section~\ref{sec:hybrid} is the sharpest in each case.}
  \label{tab:metrics_comparison}
\end{table}

\end{example}

\begin{remark}
  Let $\mG$ be an unweighted combinatorial graph without loops or
  parallel edges, and consider the corresponding metric graph $\Graph$
  with all edges having length 1.  The \emph{normalized Laplacian}
  associated with $\mG$ is the $|\mV|\times |\mV|$-matrix whose
  diagonal entries are 1 and whose off-diagonal $(\mv,\mw)$-entry is
  $-\left(\deg(\mv)\deg(\mw)\right)^{-1/2}$, where $\deg(\mv)$ denotes
    the degree of vertex $\mv$; see~\cite[Section~1.2]{Chu97}.  It now
    follows from~\cite[Theorem, page 320]{Bel85}, already mentioned in
    Example~\ref{ex:no-girth}, that the lowest positive eigenvalue
    $\lambda_2$ of the standard Laplacian
    satisfies 
    \begin{equation}\label{eq:below}
      \alpha_2=1-\cos\sqrt{\lambda_2}\qquad \hbox{whenever }\lambda_2<
      \pi^2\ ,
\end{equation}
where $\alpha_2$ is the lowest positive eigenvalue of the normalized Laplacian. The quantities $s,D,T$ all have a natural version $s_\mG,D_\mG,T_\mG$ for combinatorial graphs, which are no larger than their counterpart on metric graphs. Accordingly, estimates~\eqref{eq:waist-a-wish}, \eqref{eq:girth-bound}, \eqref{eq:diam-bound}, and~\eqref{eq:triam-bound} all imply corresponding estimates on $\alpha_2$:
\begin{itemize}
\item $\alpha_2\le 1-\cos\frac{2\pi }{s_\mG}$ whenever $\mG$ is obtained from two copies of another connected graph, $\hat{\mG}$, by pairwise gluing of  pairs of the duplicated vertices;
\item $\alpha_2<1-\cos\sqrt{\frac{48 |\mE|}{s_\mG^3}}$ whenever $\frac{48 |\mE|}{s_\mG^3}<\pi^2$;
\item $\alpha_2<1-\cos\sqrt{\frac{24 |\mE|}{D_\mG^3}}$ whenever $\frac{24 |\mE|}{D_\mG^3}<\pi^2$;
\item $\alpha_2<1-\cos\sqrt{\frac{12 |\mE|}{T_\mG^3}}$ whenever $\frac{12 |\mE|	}{T_\mG^3}<\pi^2$.
\end{itemize}
To the best of our knowledge, upper estimates on $\alpha_2$ based on the total number of edges, on girth, diameter (and of course on triameter, which we have introduced in this paper) were not previously known.
\end{remark}

\section{Proofs}
\label{sec:proofs}

\subsection{Proof of the estimates in Section~\ref{sec:walking-alone}}
\label{sec:proof-of-girth}

In this section we prove Theorem~\ref{thm:dirichlet-girth} and
Corollary~\ref{cor:girth-symm-neum}.

\begin{proof}[Proof of Theorem~\ref{thm:dirichlet-girth}]
We want to show that if $\Graph$ has at least one Dirichlet
vertex, then the first non-trivial eigenvalue satisfies the bound
\eqref{eq:dirichlet-girth}, which we repeat here for convenience,
\begin{equation*}
  \lambD(\Graph) \leq \frac{\pi^2}{s^2}.
\end{equation*}
Due to our definition of the girth, the statement of the theorem is
vacuous if $\Graph$ has no cycles and only one Dirichlet point.
Henceforth we exclude such graphs from consideration.  Then we may
assume that vertices of $\Graph$ have degree one if and only if they
are equipped with the Dirichlet condition.  Indeed, we may remove any
pendant edges with standard conditions since this operation increases
the eigenvalues (see \cite[Theorem 2]{KurMalNab13} or
\cite[Theorem~3.10]{BerKenKur19}); a Dirichlet condition imposed at a
vertex of degree $d$ separates the vertex into $d$ copies.

The proof will be based on the notion (see \cite{BerKenKur19}) of
\emph{cutting through vertices along the eigenfunction} $\psi$
associated with $\lambD$, that is, finding a simple subgraph
$\NewGraph$ within $\Graph$, which, when cut out of $\Graph$ and
equipped with non-positive $\delta$-potentials determined by $\psi$,
will have its first eigenvalue equal to $\lambD(\Graph)$ (and its
eigenfunction will be $\psi|_{\NewGraph}$). The graph $\NewGraph$ can
then be compared directly with an interval or a tadpole (lasso)
without $\delta$-potentials, yielding the inequality.

We recall that $\psi \geq 0$ has no local minima and that
the maxima are isolated because $\psi(x)>0$ implies
\begin{equation}
  \label{eq:concavity}
  \psi''(x)=-\lambD(\Graph)\psi(x)<0.
\end{equation}
For the purpose of this proof all internal (to an edge) local maxima
are to be considered vertices.  Moreover, without loss of generality
we assume there are no other vertices of degree two, see
Remark~\ref{rem:dummy}.  The process of finding the subgraph
$\NewGraph$ uses the following notion of \emph{serious points} with
respect to the eigenfunction $\psi$.

\begin{definition}
  \label{def:serious-point}
  Given a metric graph $\Graph$ and a function $0 \leq f \in D(\Delta)$, a
  vertex $\mv \in \mV$ of degree $d \geq 2$ shall be
  called a \emph{serious point} (of the function $f$) if $f(\mv) \neq 0$
  and there exist at least two edges $\me_1,\me_2 \sim \mv$ such that
  \begin{equation}
    \label{eq:going_down_edge}
    \partial_\nu f|_{\me_i} (\mv) \geq 0, \qquad i=1,2.
  \end{equation}
  Here the normal derivatives are taken pointing \emph{into} the
  vertex.
\end{definition}

We remark that by the Kirchhoff condition, see
\eqref{eq:current_conservation_condition}, every vertex is incident with
\emph{at least one} edge satisfying
condition~\eqref{eq:going_down_edge}.

Any local maximum of $f$ (as defined in the obvious way) is serious,
therefore the set of serious points of $\psi$ is a non-empty finite
subset of the compact graph $\Graph$.  Let $\mv_0 \in \mV (\Graph)$
denote a lowest serious point, that is, $\mv_0$ is serious and
$0< \psi(\mv_0) \leq \psi (\mv)$ for every serious point
$\mv \in \mV (\Graph)$ of $\psi$.  Denote by $\me_{1},\me_{-1}$ any
two edges incident with $\mv_0$ such that
$\partial_\nu \psi|_{\me_{\pm 1}} (\mv_0) \geq 0$. 

Now denote by $\mv_{1}$ the other vertex incident with $\me_{1}$. If
it is a Dirichlet vertex, we stop.  Otherwise, by
concavity~\eqref{eq:concavity} we have
$\psi (\mv_{1}) < \psi (\mv_0)$; hence, $\mv_{1}$ is not serious.  We
denote by $\me_2$ the unique edge $\me_{2}$ adjacent to $\mv_{1}$ such
that $\partial_\nu \psi|_{\me_{2}} (\mv_{1}) \geq 0$.  Repeating the
process with $\me_{2}$ and so on, we obtain a path
$\me_1, \mv_1, \me_2, \ldots, \me_{m}$ terminating at a Dirichlet
vertex.  We perform the same for $\me_{-1}$, constructing another path
$\me_{-1}, \mv_{-1}, \me_{-2}, \ldots, \me_{-n}$ through $\Graph$,
also terminating at a Dirichlet vertex.  We define the graph
$\NewGraph$ to be the union of these two paths.  It is either a path
in $\Graph$ joining two Dirichlet vertices, or at some point the two
paths leading from $\me_{1}$ and $\me_{-1}$ meet and $\NewGraph$ is a
tadpole ending at a single Dirichlet point, see
Figures~\ref{fig:serious} and~\ref{fig:tadpole}.
\begin{figure}[H]
  \centering
  \includegraphics[scale=1]{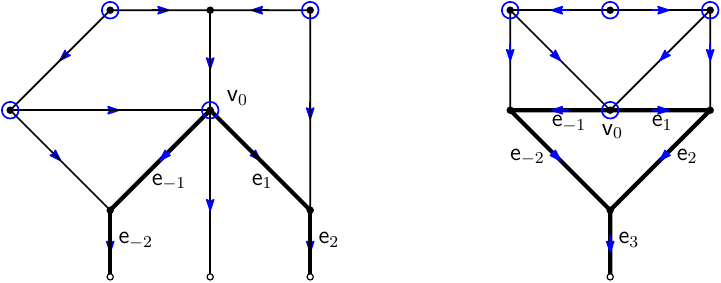}
  \caption{Examples of metric graphs with Dirichlet vertices (marked
    as small empty disks) and with a schematic depiction of the first
    eigenfunction as a gradient flow (blue arrows).  The serious
    points are now the vertices with at least two outward-pointing
    arrows; they are circled in blue.  The paths that are constructed
    in the course of the proof of Theorem~\ref{thm:dirichlet-girth}
    are shown as thicker edges.}
  \label{fig:serious}
\end{figure}
In the first case, the path is at least of
length $s$; in the second, the cycle of the tadpole is of
length at least $s$.
{
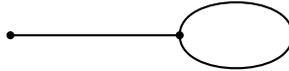
\begin{figure}[H]
\centering
\begin{tikzpicture}[scale=0.75]
\coordinate (a) at (-3,0);
\coordinate (b) at (0,0);
\coordinate (c) at (2,0);
\draw[thick] (a) -- (b);
\draw[thick,bend left=90]  (b) edge (c);
\draw[thick,bend right=90]  (b) edge (c);
\draw[fill] (a) circle (1.75pt);
\draw[fill] (b) circle (1.75pt);
\end{tikzpicture}
\caption{A \emph{tadpole} (or \emph{lasso}) consists of a cycle 
attached to a pendant edge. In the proof the degree one vertex will be 
equipped with a Dirichlet condition.}
\label{fig:tadpole}
\end{figure}
}
Now we wish to cut $\NewGraph$ out of $\Graph$ ``along'' the
eigenfunction $\psi$: at each non-Dirichlet vertex $\mv_{i}$ of
$\NewGraph$ ($i=0,\pm 1, \pm 2, \ldots$), we add a $\delta$-condition
of the form
\begin{displaymath}
  \sum_{\substack{
  \me\sim \mv_{i} \\
  \me \in \NewGraph}
  } \partial_\nu u|_{\me} (\mv_{i})
  + \gamma_{\mv_i} u(\mv_{i}) = 0,
\end{displaymath}
where
\begin{equation}
  \label{eq:Kirchhoff_leftover}
  \gamma_{\mv_i} := \frac1{\psi(\mv_{i})} \sum_{\substack{
  \me\sim \mv_{i}\\
   \me \in \Graph \setminus \NewGraph}}
  \partial_\nu \psi|_{\me} (\mv_{i}) = 
  - \frac1{\psi(\mv_{i})} \sum_{\substack{\me\sim \mv_{i} \\
   \me \in \NewGraph}} 
  \partial_\nu \psi|_{\me} (\mv_{i}), 
\end{equation}
where the second equality follows from the Kirchhoff
condition.  The $\delta$-conditions are chosen precisely so that
$\psi|_{\NewGraph}$ is still an eigenfunction of $\NewGraph$.
Therefore, $\lambD (\Graph)$ is still an eigenvalue, and, since $\psi$
is non-negative, $\lambD (\Graph) = \lambda_1(\NewGraph)$ (cf.\
Theorem~\ref{thm:changing_vc}).

We claim that $\gamma_{\mv_0} \leq 0$ and $\gamma_{\mv_i} < 0$ for
$i\neq 0$ (recall that $\nu$ points into $\mv_i$). For $\mv_0$ this
follows from the choice of $\me_{1}$ and $\me_{-1}$. For $i \neq 0$
this follows since $\mv_i$ is not serious: $\me_{i+1} \in \NewGraph$
is the only edge incident with $\mv_i$ in $\Graph$ for which
$\partial_\nu \psi|_{\me} (\mv_{i}) \geq 0$; hence all derivatives in
the first sum in \eqref{eq:Kirchhoff_leftover} are strictly negative.
Note that the degree of $\mv_i$ is at least 3 for all $i\neq0$ since all degree 2 vertices
have been suppressed; thus the sum contains at least one summand.

By Theorem~\ref{thm:changing_vc}, replacing
all $\gamma_{\mv_i}$ with 0 can only increase
the eigenvalues.  Therefore, $\lambD (\Graph)$ is bounded from above
by the first eigenvalue of either the Dirichlet interval of length $s$
or a Dirichlet tadpole with cycle length $s$.  In both cases the
eigenvalue is at most $(\pi/s)^2$ by estimate \eqref{eq:edge_length_estimateD}.

Finally, we discuss the case of equality.  The eigenvalue of an
equilateral Dirichlet star is well known to be $(\pi/2\ell)^2$, where
$\ell=s/2$ is the edge length.  To prove necessity, we first observe
that $\psi|_{\NewGraph}$ is a simple eigenfunction which does not
vanish at any point of $\NewGraph$ except for the Dirichlet
vertices. In particular, in the case of a tadpole the inequality must
be strict if the outgrowth is non-trivial.  In the case of a path,
$\NewGraph$ cannot contain any edges other than $\me_1$ or $\me_{-1}$,
since $\gamma_{\mv_i} < 0$ when $i\neq0$, and strictly increasing
$\gamma$ strictly increases $\lambda_1$
(Theorem~\ref{thm:changing_vc}). The same reasoning yields
$\partial_\nu \psi|_{\me_{\pm 1}} (\mv_0) = 0$.

If the degree of $\mv_0$ is larger than 2, the Kirchhoff
condition now implies
\begin{equation}
  \label{eq:Kirchhoff_leftover2}
  \sum_{\me\sim \mv_0, \me\neq \me_{\pm 1}} 
  \partial_\nu \psi|_{\me} (\mv_0) = 0,
\end{equation}
and therefore there is another edge $\widetilde{\me}_1$ with
$\partial_\nu \psi|_{\widetilde{\me}_1} (\mv_0) \geq 0$.  Repeating the
proof with $\widetilde{\me}_1$ instead of $\me_1$ we can similarly
conclude that $\widetilde{\me}_1$ leads to a Dirichlet vertex and
$\partial_\nu \psi|_{\widetilde{\me}_1} (\mv_0) = 0$.  Proceeding by
induction (and applying \eqref{eq:Kirchhoff_leftover2} to the sums
over fewer and fewer edges), we conclude that every edge incident with
$\me_0$ leads to a Dirichlet vertex with no vertices of degree 
$\geq 3$ along the way. 
Thus $\Graph$ is a star graph, whose ground state eigenfunction reaches its maximum 
at the central vertex $\mv_0$.  Moreover,
$\partial_\nu\psi_\me(\mv_0)=0$ for every incident edge $\me$, therefore
--- since the eigenvalue is $\pi^2/s^2$ --- the edge length must be $s/2$.
\end{proof}

\begin{proof}[Proof of Corollary~\ref{cor:girth-symm-neum}]
  Let $R$ be the reflection operator on $L^2(\Graph)$ induced by the
  corresponding reflection operator $\rho: \Graph \to \Graph$ acting
  on the metric space $\Graph$. Now, $R$ is a bounded linear operator on
  $L^2(\Graph)$ that commutes with the standard Laplacian; in
  particular, the standard Laplacian is reduced by the closed
  orthogonal subspaces
  \[
    {\mathcal S} := \left\{\frac{f+Rf}{2}:f\in L^2(\Graph)\right\}
    \quad\hbox{and}\quad
    {\mathcal A} := \left\{\frac{f-Rf}{2}:f\in L^2(\Graph)\right\}
  \]
  of symmetric and anti-symmetric functions in $L^2(\Graph)$,
  respectively.
  
  Since the ground state of $\Graph$ clearly belongs to $\mathcal{S}$,
  \begin{equation}
    \label{eq:antisymm_states}
    \lambda_2(\Delta_\Graph) \leq
    \lambda_1\left(\Delta_\Graph\big|_{\mathcal{A}}\right).    
  \end{equation}
  It is easy to see that ${\mathcal A}$ is isomorphic to
  $L^2({\ReflGraph})$, whereas
  $\Dom\left(\Delta_\Graph\big|_{\mathcal{A}}\right)$ is the domain of
  the standard Laplacian on $\ReflGraph$ with the exception of
  Dirichlet conditions at
  $\partial\ReflGraph = \{\mv \in \Graph: \rho(\mv) = \mv\}$,
  identified as a subset of $\ReflGraph$.  The Dirichlet conditions
  may decompose $\ReflGraph$ into several connected components joined
  together at $\partial\ReflGraph$. In any case, each connected
  component of $\ReflGraph$ necessarily has girth at least $s/2$,
  while the estimate \eqref{eq:dirichlet-girth} also applies on each
  connected component.  Estimate \eqref{eq:waist-a-wish} follows
  immediately.
\end{proof}

\subsection{Proof of the estimates in Section~\ref{sec:hybrid}}
\label{sec:proofs_hybrid}

Throughout this section we assume the graph $\Graph$ is finite,
compact, and connected, and has standard (Neumann--Kirchhoff)
conditions at every vertex.  All upper bounds in this section are
based on the variational characterization of the second eigenvalue,
\begin{equation}
  \label{eq:variational_second_eig}
  \lambN(\Graph) = \min\left\{
    \frac{\int_\Graph |f'(x)|^2 dx}{\int_\Graph |f(x)|^2 dx}
    \colon
    f \in H^1(\Graph),\ \int_\Graph f(x) dx = 0 \right\},
\end{equation}
where the Sobolev space $H^1$ of the graph is defined as
\begin{equation}
  \label{eq:Sobolev_H1_def}
  H^1(\Graph) := \left\{
    u\in C(\Graph)\cap \bigoplus_{e\in E}H^1(0,\ell_e)
    \colon
    \|u'\|_{L^2(\Graph)}<\infty \right\}.
\end{equation}
Upper bounds can now be obtained by choosing a suitable test
function $f \in H^1(\Graph)$ having mean value 0 (i.e. being 
orthogonal to the constants, which span the
eigenspace of the first eigenvalue).
For example, using the test function
$f(x) = \sin\left(2\pi x/\ell_{\max}\right)$ on the longest edge,
extended by zero to the rest of the graph, we immediately obtain
estimate \eqref{eq:edge_length_estimate} from the introduction.

The main difficulty in choosing the test function arises from the
requirement that $f$ have mean zero.  Our principal tool for satisfying
this requirement will be to build a homotopy between a function and
its negative in the punctured form domain.

\begin{lemma}
  \label{lem:homotopy-gen}
  Let $a$ be a positive, closed quadratic form whose domain $D(a)$ is compactly embedded in a Hilbert space $H$. Assume the associated positive semi-definite self-adjoint operator $A$ on  $H$ to have one-dimensional null space  spanned by some function $u$.
If, { for a family of functions $\psi_\cdot \colon [0,1] \to D(a) \setminus \{0\}$, 
\begin{itemize}
\item[(1)] the mapping $[0,1]\ni t \mapsto \left< \psi_t, u\right>_H\in \R$ is continuous, and 
\item[(2)] $\psi_0 = -\psi_1$,
\end{itemize}
 then} there exists $t_0$ such
  that the second lowest eigenvalue of $A$ satisfies
  \begin{equation}
    \label{eq:lambda2_est_homotopy-abs}
    \lambda_2(A) \leq \frac{a(\psi_{t_0})}
    {\left\|\psi_{t_0}\right\|_{H}^2}.
  \end{equation}
\end{lemma}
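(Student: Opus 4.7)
The plan is to exploit the variational characterization of the second eigenvalue of $A$, which thanks to the compact embedding $D(a)\hookrightarrow H$ and the one-dimensionality of $\ker A = \operatorname{span}\{u\}$ reads
\begin{equation*}
    \lambda_2(A)
    = \inf\left\{\frac{a(f)}{\|f\|_H^2} \colon f\in D(a)\setminus\{0\},\ \langle f,u\rangle_H = 0\right\}.
\end{equation*}
Thus it suffices to produce a single $t_0\in[0,1]$ for which $\psi_{t_0}\in D(a)\setminus\{0\}$ and $\langle\psi_{t_0},u\rangle_H = 0$; the estimate \eqref{eq:lambda2_est_homotopy-abs} then follows by using $\psi_{t_0}$ as a test function.

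To locate such a $t_0$, I would consider the real-valued function
\begin{equation*}
    \varphi \colon [0,1]\to\R, \qquad \varphi(t) := \langle \psi_t, u\rangle_H,
\end{equation*}
which is continuous by hypothesis. The symmetry condition $\psi_0 = -\psi_1$ yields $\varphi(1) = -\varphi(0)$, so either $\varphi(0) = 0$ (in which case we take $t_0 = 0$) or $\varphi(0)$ and $\varphi(1)$ have strictly opposite signs. In the latter case the intermediate value theorem produces some $t_0\in(0,1)$ with $\varphi(t_0) = 0$. Since by assumption $\psi_t\neq 0$ in $H$ for every $t\in[0,1]$, the quotient $a(\psi_{t_0})/\|\psi_{t_0}\|_H^2$ is well defined, and the variational inequality above closes the argument.

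I do not foresee any serious obstacles: the argument is essentially a one-line application of the intermediate value theorem combined with the min-max principle. The only mild subtlety is to emphasize that the continuous path lies in the \emph{punctured} form domain $D(a)\setminus\{0\}$, which is needed to ensure the Rayleigh quotient at $\psi_{t_0}$ is well defined; the compact embedding $D(a)\hookrightarrow H$ is invoked implicitly, as it guarantees that $A$ has compact resolvent and hence a genuine second eigenvalue to which the characterization applies.
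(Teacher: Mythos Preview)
Your proof is correct and follows exactly the same approach as the paper: apply the Intermediate Value Theorem to $t\mapsto\langle\psi_t,u\rangle_H$ to find a $t_0$ where this vanishes, then use $\psi_{t_0}$ as a test function in the variational characterization of $\lambda_2(A)$. The paper's proof is essentially the same argument compressed into two sentences.
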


\begin{proof}
  Since
  $\left< \psi_0, u \right>_{H} = -\left< \psi_1, u
  \right>_{H}$, by the Intermediate Value Theorem there is
  at least one $t_0\in[0,1]$ such that
  $\left< \psi_{t_0}, u \right>_{H} = 0$.  Then we use
  $\psi_{t_0}$ as a test function in the abstract version of 
  \eqref{eq:variational_second_eig}.
\end{proof}

Taking $H=L^2(\Graph)$, $u=\mathbf 1 \in L^2(\Graph)$ and $a$ the
quadratic form associated with the metric graph Laplacian with
standard vertex conditions, we immediately obtain the following.
\begin{corollary}
  \label{cor:homotopy}
  Let $\psi_\cdot \colon [0,1] \to H^1(\Graph) \setminus \{0\}$ be such that
  $\psi_0 = -\psi_1$ and the mapping
  $t \mapsto \left< \psi_t, 1 \right>_{L^2(\Graph)}$ is a continuous
  function $[0,1] \to \R$.  Then there exists $t_0$ such
  that
  \begin{equation}
    \label{eq:lambda2_est_homotopy}
    \lambN (\Graph) \leq \frac{\left\|\psi_{t_0}'\right\|_{L^2(\Graph)}^2}
    {\left\|\psi_{t_0}\right\|_{L^2(\Graph)}^2}.
  \end{equation}
\end{corollary}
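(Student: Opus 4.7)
The plan is to verify that the abstract hypotheses of Lemma~\ref{lem:homotopy-gen} are satisfied by the specific choices in the statement of Corollary~\ref{cor:homotopy}, after which the conclusion is immediate. Concretely, I would take $H = L^2(\Graph)$ and let $a$ be the quadratic form $a(f) = \int_\Graph |f'(x)|^2\,dx$ with domain $D(a) = H^1(\Graph)$ as given by \eqref{eq:Sobolev_H1_def}; the associated self-adjoint operator is the standard Laplacian $-\Delta_\Graph$, and $a$ is manifestly positive and closed.

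The first thing to check is compactness of the embedding $H^1(\Graph) \hookrightarrow L^2(\Graph)$, which holds since $\Graph$ is compact (each edge is a finite interval, so Rellich--Kondrachov on each edge, combined with finiteness of the edge set, gives the result); one can cite the appendix of the paper for this standard fact. The second thing to check is that the null space of $-\Delta_\Graph$ is one-dimensional and spanned by $u = \mathbf 1$. This is precisely the statement that $\Graph$ is connected and that the ground state of the standard Laplacian is $\lambO = 0$ with constant eigenfunction; since zero eigenfunctions must be edgewise affine and continuous with matching Kirchhoff derivative sums, only the constants survive.

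Having verified these two hypotheses, the continuity assumption in Corollary~\ref{cor:homotopy} on the map $t \mapsto \langle \psi_t, \mathbf 1\rangle_{L^2(\Graph)} = \int_\Graph \psi_t(x)\,dx$ is exactly the continuity assumption in Lemma~\ref{lem:homotopy-gen}, while the anti-symmetry condition $\psi_0 = -\psi_1$ is identical. Lemma~\ref{lem:homotopy-gen} then yields some $t_0 \in [0,1]$ for which
\begin{equation*}
  \lambda_2(-\Delta_\Graph) \leq \frac{a(\psi_{t_0})}{\|\psi_{t_0}\|_{L^2(\Graph)}^2} = \frac{\|\psi_{t_0}'\|_{L^2(\Graph)}^2}{\|\psi_{t_0}\|_{L^2(\Graph)}^2},
\end{equation*}
and since $\lambda_2(-\Delta_\Graph) = \lambN(\Graph)$ by definition, this is exactly \eqref{eq:lambda2_est_homotopy}.

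There is no real obstacle here: the proof is a pure specialisation of the abstract homotopy lemma. The only conceptual point worth noting is that the orthogonality condition to constants built into the variational characterisation \eqref{eq:variational_second_eig} corresponds precisely to being in the orthogonal complement of the null space of $A$ in the abstract setup, which is why writing the argument abstractly in Lemma~\ref{lem:homotopy-gen} was useful in the first place — the same proof will apply verbatim to Dirichlet Laplacians, magnetic Laplacians, or Laplacians on other domains when one replaces $\mathbf 1$ by the corresponding ground state.
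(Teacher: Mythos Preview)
Your proof is correct and follows exactly the paper's approach: the corollary is obtained by specializing Lemma~\ref{lem:homotopy-gen} to $H=L^2(\Graph)$, $u=\mathbf 1$, and $a$ the quadratic form of the standard Laplacian. The paper states this in a single line without spelling out the verification of the hypotheses, but your more detailed checks (compact embedding, one-dimensional null space) are accurate and harmless.
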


The proofs of the theorems in Section~\ref{sec:hybrid} are now reduced
to constructing a suitable family of test functions.


\begin{proof}[Proof of Theorem~\ref{thm:test-of-diameter}]
  Introduce the ``tent'' function
  \begin{equation}
    \label{eq:tent_test}
    \tau_{y, d}(x) =
    \begin{cases}
      d - \dist(x,y), & \hbox{if }\dist(x,y) \leq d, \\
      0, & \text{otherwise},
    \end{cases}
  \end{equation}
  let $d=D/2$, and take
  \begin{equation}
    \label{eq:homotorpy_for_diameter}
    \psi_t := \cos(\pi t) \tau_{x_1, d} + \sin(\pi t) \tau_{x_2, d},
  \end{equation}
  where $x_1$ and $x_2$ are a pair of points on the graph realizing
  the diameter.  Note that $\tau_{x_1, d}$ and $\tau_{x_2, d}$
  have disjoint supports.  Then $\psi_t$ satisfies the conditions of
  Corollary~\ref{cor:homotopy} and we can estimate
  \begin{equation*}
    \left\|\tau_{x_1,d}'(x)\right\|_{L^2(\Graph)}^2
    \leq \int_{x \colon \dist(x,x_1) \leq d} 1 dx
    \leq L,
  \end{equation*}
  and
  \begin{equation}
    \label{eq:diam_L2_est}
    \left\|\tau_{x_1,d}(x)\right\|_{L^2(\Graph)}^2
    \geq
    \int_{0}^{d} (d-x)^2 dx
    = \frac{d^3}3,
  \end{equation}
  where we estimate the $L^2$-norm by only integrating along the path
  realizing the diameter.  Combining, we obtain the desired estimate
  \begin{equation*}
    \lambN \leq
    \frac{\cos^2(\pi t)L + \sin^2(\pi t)L}{\cos^2(\pi t)\frac{D^3}{24}
      + \sin^2(\pi t)\frac{D^3}{24}} \leq 
    \frac{24 L}{D^3}.
  \end{equation*}
  Since our test function cannot possibly be an eigenfunction, being
  piecewise linear, the inequality is strict.
\end{proof}


\begin{proof}[Proof of Theorem~\ref{thm:avoidance_estimate}]
  We use Corollary~\ref{cor:homotopy} with
  \begin{equation}
    \label{eq:homotopy_avoidance}
    \psi_t := \tau_{\gamma(e^{i\pi t}), d} - \tau_{\gamma(-e^{i\pi t}), d},
  \end{equation}
  where $\gamma$ is the curve realizing the avoidance diameter and
  $d=A/2$.  The supports are
  disjoint and the gradient is 1 or 0, therefore
  $\|\psi_t'\|^2 \leq L$.  For the norm, we amend estimate
  \eqref{eq:diam_L2_est} to use injectivity of $\gamma$ and obtain
  \begin{equation}
    \label{eq:diam_L2_est_inject}
    \|\psi_t\|^2
    \geq
    2\int_{-d}^{d} \big(d-|x|\big)^2 dx
    = \frac{4d^3}3 = \frac{A^3}{6}.
  \end{equation}
  It is clear that the test function is not an eigenfunction,
  therefore the inequality is strict.
\end{proof}




\begin{proof}[Proof of Theorem~\ref{thm:test-of-triameter}]
  We use Corollary~\ref{cor:homotopy} with
  \begin{equation}
    \label{eq:homotopy_triameter}
    \psi_t := h_1(t) \tau_{x_1, d} + h_2(t) \tau_{x_2, d} + h_3(t) \tau_{x_3, d},
  \end{equation}
  where $x_1,x_2,x_3 \in \Graph$ are any three points realizing the triameter,
  $d=T/2$, and the functions $h_j$ satisfy $h_j(0)=1$ and $h_j(1)=-1$.
  Using the disjoint supports, we can estimate
  \begin{equation*}
    \lambN \leq \max_{t\in[0,1]} \frac{3L 
      \max \left\{h_1(t)^2, h_2(t)^2, h_3(t)^2\right\}}
    {d^3 \big(h_1(t)^2+h_2(t)^2+h_3(t)^2\big)}.
  \end{equation*}
  Choosing $h_j$ so that $|h_j(t)|\leq1$ and only one of them can be
  different from $\pm1$ at any given time (in other words, they take
  turns to go from 1 to $-1$), yields
  \begin{equation*}
    \lambN < \frac{3L}{2d^3} = \frac{12L}{T^3}.
  \end{equation*}
    This concludes the proof.
\end{proof}

\appendix 
\section{Laplacian on metric graphs: definitions and useful results}

\label{sec:definitions_and_results}

In this appendix we review the basic definitions and terminology used
in this paper.  For further information, the reader is invited
to consult various surveys and books on the subject
\cite{Berk17,BerKuc13,Mug15}.  We also formulate a 
result based on \cite{BerKenKur19,Kur19} that we use 
repeatedly in Section~\ref{sec:proof-of-girth}.

Let $\Graph = (\mV,\mE)$ be a graph with vertex set
$\mV$ and edge set $\mE$.  The graph is a metric graph
if each edge $\me\in \mE$ is identified with an interval $(0,\ell_\me)$, where $\ell_\me>0$ is regarded as the length of the edge.  We shall write
$\me \sim \mv$ to mean that the edge $\me$ is incident with the vertex
$\mv$; and denote by $\mE_\mv$ the set of such edges. A graph is \emph{compact}, if it has a finite number
of edges, each edge of finite length.  We denote by $L$ or $|\Graph|$
the total length of the graph, i.e.\ the sum of the lengths of the
edges of the graph.  $\Graph$ is allowed to contain loops as well as
multiple edges between given pairs of vertices. A cycle in $\Graph$
is, formally, a map $\gamma: [0,1] \to \Graph$ such that
$\gamma (0) = \gamma (1)$ and $\gamma$ is injective on
$[0,1)$. However, we do not usually distinguish between $\gamma$ and
its image, a closed subset of $\Graph$.

In this paper we study the spectrum of the Laplacian $-\Delta$ on
$\Graph$.  More precisely, the operator acts as $-\frac{d^2}{dx^2}$ on
the functions which are in the Sobolev space $H^2(0,\ell_\me)$ on each edge
$\me\in \mE$.  The domain $D(\Delta)$ of the operator is further restricted
to functions that satisfy at any vertex $\mv \in \mV$ one of
the following conditions:
\begin{itemize}
\item \emph{Standard}\footnote{Also known as Neumann--Kirchhoff, 
    among other names;
    observe that on a degree-one vertex standard conditions agree with
    common Neumann ones.} conditions: at $\mv$, we demand continuity of
  the functions and that the sum of the normal derivatives at each
  vertex is zero (``Kirchhoff'' or ``current conservation''
  condition):
  \begin{equation}
    \label{eq:current_conservation_condition}
    \sum_{\me \sim \mv} \nd{f}{\me}{\mv} = 0,
  \end{equation}
  where $\nd{f}{\me}{\mv}$ is the normal derivative of $f$ on $\me$ at $\mv$,
  with $\nu = \nu_\me(\mv)$ pointing \emph{outward} (away from the edge
  $e$, towards the vertex);
\item \emph{Dirichlet} conditions: at $\mv$, any
  functions in the domain of $\Delta$ should take on the value zero. 
  We denote the set of vertices equipped with Dirichlet conditions 
  by $\VertexSet_D$.
\item \emph{Anti-periodic}\footnote{The name
    ``anti-periodic'' goes back to the theory of Hill's operator, see,
    e.g., \cite{MarOst_ms75}.  Some authors use the name ``signing
    conditions'' to highlight connections to the construction of
    combinatorial Ramanujan graphs \cite{BilLin_c06,MarSpiSri15}. For
    vertices of general degree, the corresponding conditions are often
    called ``anti-Kirchhoff'' conditions, as in, e.g.,
    \cite[Classification~2.3.II]{RohSei20}.} conditions: at $\mv$ of
  degree 2,
  \begin{equation}
    \label{eq:AK_conditions_def}
    f_{\me_1}(\mv) = -f_{\me_2}(\mv), \qquad
    \partial_\nu f_{\me_1}(\mv) = \partial_\nu f_{\me_2}(\mv).
  \end{equation}
  { These conditions are used in
    Remarks~\ref{rem:MSS_graphs} of the
  present paper (see also Remark~\ref{rem:gauge_invariance} below.}
\item \emph{$\delta$ (or Kirchhoff--Robin)} conditions: associated with $\mv$
  there is a $\gamma = \gamma(\mv) \in \R$, $\gamma\neq 0$ such that the
  functions $f \in D(\Delta)$ are continuous at $\mv$ and the
  derivatives at $\mv$ satisfy
  \begin{equation}
    \label{eq:robin_def}
    \sum_{\me \sim \mv} \nd{f}{\me}{\mv} + \gamma f(\mv) = 0,
  \end{equation}
  where, again, $\nu$ is the outer unit normal to the edge. 
  We sometimes refer to $\gamma$ as the \emph{strength} of the
  $\delta$-condition, or as the \textit{$\delta$-potential} at $\mv$.
\end{itemize}
We see immediately that the $\delta$-condition with $\gamma(\mv)=0$
corresponds to the standard condition.  Furthermore, Dirichlet
conditions correspond formally to $\delta$-conditions of strength
$\gamma=\infty$ (this correspondence may be made rigorous
\cite{BerKuc12}, and we will use it below).

\begin{remark}
  \label{rem:dummy}
  Any point in the interior of an edge may be declared to be a vertex
  of degree two with standard conditions without affecting the spectral
  properties of the operator.  We will refer to this as introducing a
  ``dummy'' vertex.  Conversely, any vertex $v$ of degree two with
  standard conditions may be suppressed.  Likewise, the operator is
  not modified if a subset of the elements of $\mV_D$ are
  identified to form one single Dirichlet vertex.
\end{remark}

\begin{remark}
  \label{rem:gauge_invariance}
  Declaring an arbitrary point $x\in\me$ to be a vertex $\mv=\mv_x$ of
  degree two, we can also impose anti-periodic
  conditions~\eqref{eq:AK_conditions_def} there.  Due to 
  ``gauge invariance'', different choices for the location $x$  
  within the same edge result
  in unitarily equivalent operators \cite[Section 2.6]{BerKuc13};
  we thus refer to this as imposing anti-periodic conditions on
  the edge $\me\in\mE$.
\end{remark}


As is well known, under the above set of assumptions the Laplacian is
self-adjoint, semi-bounded, and has trace class resolvent; in
particular, its spectrum consists of a sequence of real eigenvalues of
finite multiplicity, which we denote by
\begin{equation}
  \label{eq:eigenvalues}
  \lambda_1 \leq \lambda_2 \leq \lambda_3 \leq \ldots,
\end{equation}
where each is repeated according to its multiplicity. The corresponding
eigenfunctions may be chosen to form an orthonormal basis of
$L^2(\Graph)$, and may additionally without loss of generality all be
chosen real.

The following theorem summarizes properties of the first eigenvalue 
used extensively in Section~\ref{sec:proof-of-girth}, including a 
an interlacing inequality for the eigenvalues, which in its sharpest form 
(with a characterization of equality) appeared in \cite{BerKenKur19}.

\begin{theorem}
\label{thm:changing_vc}
Let $\Graph$ be a compact graph equipped with 
a $\delta$-condition of strength $\gamma_i \in (-\infty,\infty]$ at each vertex 
$\mv_i \in \mV$. Denote by $\VertexSet_D$ the set of all Dirichlet vertices, 
\begin{displaymath}
	\VertexSet_D = \{ \mv_i \in \VertexSet : \gamma_i = \infty \},
\end{displaymath}
and suppose that $\Graph \setminus \VertexSet_D$ is connected. Then 
$\lambda_1 = \lambda_1 (\Graph)$ is simple and its eigenfunction $\psi$ may 
be chosen strictly positive in $\Graph \setminus \VertexSet_D$.

Moreover, if $\Graph'$ is formed by replacing the potential $\gamma_i \in \R$ at 
$\mv_i$ with $\gamma_i' \in (\gamma_i,\infty]$, then
\begin{equation}
\label{eq:surgery-eigenvalue-lifting}
	\lambda_1 (\Graph') > \lambda_1 (\Graph).
\end{equation}
\end{theorem}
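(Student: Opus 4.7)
The plan is to translate the statement into a form-theoretic setting and combine a Perron--Frobenius style positivity argument (for simplicity and strict positivity) with a strict Rayleigh quotient comparison (for the monotonicity under raising a single $\delta$-potential). Throughout, let
\[
a(f) = \int_\Graph |f'|^2\,dx + \sum_{\mv_i \notin \VertexSet_D} \gamma_i\, f(\mv_i)^2
\]
with form domain $\Dom(a)=\{f\in H^1(\Graph) : f(\mv)=0\text{ for every }\mv\in\VertexSet_D\}$; the operator $-\Delta$ is the Friedrichs extension of $a$, and $\lambda_1(\Graph)$ is the infimum of $a(f)/\|f\|_{L^2(\Graph)}^2$.

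For the first assertion, the key observation is that $a$ is of Dirichlet type: the vertex terms depend only on $f(\mv_i)^2$ and $\||f|'\|_{L^2}\leq \|f'\|_{L^2}$, so $a(|f|)\leq a(f)$. Hence if $\psi$ is a ground state, so is $|\psi|$, and we may assume $\psi\geq 0$. I would then run a strong maximum principle edge by edge: on each edge $\psi$ solves the linear ODE $-\psi''=\lambda_1\psi$, so $\psi(x_0)=\psi'(x_0)=0$ at an interior point forces $\psi\equiv 0$ on that edge, and nonnegativity automatically yields $\psi'(x_0)=0$ at any interior zero. At a non-Dirichlet vertex $\mv$ with $\psi(\mv)=0$, nonnegativity together with $\nu$ pointing into $\mv$ gives $\nd{\psi}{\me}{\mv}\leq 0$ on each incident edge, so the $\delta$-condition $\sum_{\me\sim\mv}\nd{\psi}{\me}{\mv}+\gamma(\mv)\psi(\mv)=0$ becomes a sum of non-positive terms equal to zero; each term therefore vanishes, and combined with $\psi(\mv)=0$, ODE uniqueness kills $\psi$ on every edge incident to $\mv$. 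Iterating across the connected set $\Graph\setminus\VertexSet_D$ forces $\psi\equiv 0$, a contradiction. Thus $\psi>0$ on $\Graph\setminus\VertexSet_D$. Simplicity then follows by the standard argument: if two independent ground states existed, a suitable linear combination would be a nonzero ground state with a zero in $\Graph\setminus\VertexSet_D$, contradicting the strict positivity just established.

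For~\eqref{eq:surgery-eigenvalue-lifting}, let $\phi'$ denote the ground state of $\Graph'$, which by the first part applied to $\Graph'$ is strictly positive at $\mv_i$ when $\gamma_i'<\infty$ and vanishes at $\mv_i$ when $\gamma_i'=\infty$. In the first case, $\phi'\in\Dom(a)$ and
\[
a(\phi') = a'(\phi') - (\gamma_i'-\gamma_i)\,\phi'(\mv_i)^2 < a'(\phi'),
\]
so the variational characterization gives $\lambda_1(\Graph)\leq a(\phi')/\|\phi'\|^2<\lambda_1(\Graph')$. In the second case, $\phi'\in\Dom(a)$ with $\phi'(\mv_i)=0$ and $a(\phi')=a'(\phi')$, yielding $\lambda_1(\Graph)\leq \lambda_1(\Graph')$; equality would make $\phi'$ a ground state of $\Graph$ vanishing at the non-Dirichlet vertex $\mv_i$ of $\Graph$, contradicting the strict positivity established for the ground state of $\Graph$ itself. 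The main obstacle I anticipate lies in executing the strong maximum principle cleanly at vertices carrying $\delta$-potentials of arbitrary real strength; the argument succeeds precisely because the condition $\psi(\mv)=0$ erases the Robin term so that the sign of $\gamma(\mv)$ never intervenes, whereas any more exotic vertex coupling would demand a separate treatment.
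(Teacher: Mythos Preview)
Your argument is correct. Compared with the paper's proof, you take a more self-contained route at both stages. For simplicity and strict positivity the paper simply cites \cite{Kur19}, whereas you supply the Perron--Frobenius/strong maximum principle argument in full; your handling of a non-Dirichlet zero vertex is clean and, as you note, works uniformly for any real $\gamma(\mv)$ precisely because $\psi(\mv)=0$ kills the Robin term. For the strict monotonicity \eqref{eq:surgery-eigenvalue-lifting} the paper argues in the opposite direction: it uses the ground state $\psi$ of $\Graph$, observes that $\psi(\mv_i)>0$ prevents $\psi$ from satisfying the $\delta$-condition of strength $\gamma_i'$, and then invokes the interlacing result \cite[Theorem~3.4]{BerKenKur19}. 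You instead plug the ground state $\phi'$ of $\Graph'$ into the Rayleigh quotient of $\Graph$, splitting into the cases $\gamma_i'<\infty$ (where $\phi'(\mv_i)>0$ gives a strictly smaller quotient) and $\gamma_i'=\infty$ (where equality would force $\phi'$ to be the unique ground state of $\Graph$, contradicting $\phi'(\mv_i)=0$). Your approach is more elementary in that it avoids appeal to an external surgery/interlacing theorem; the paper's approach is terser but leans on two references. One minor remark: in the Dirichlet case $\gamma_i'=\infty$ you do not need (and wisely do not invoke) strict positivity of $\phi'$ on $\Graph'$, since $\Graph'\setminus\VertexSet_D'$ need not remain connected; your argument only uses positivity of the ground state of $\Graph$, which is guaranteed by hypothesis.
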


\begin{proof}
For simplicity of $\lambda_1$ and strict positivity of $\psi$, see \cite{Kur19}. 
For \eqref{eq:surgery-eigenvalue-lifting}, since $\psi$ does not satisfy the 
$\delta$-condition at $\mv_i$ in the graph $\Graph'$, it cannot be an 
eigenfunction; hence $\lambda_1 (\Graph')$ and $\lambda_1 (\Graph)$ 
have no eigenfunctions in common. The (strict) inequality 
\eqref{eq:surgery-eigenvalue-lifting} now follows immediately from 
\cite[Theorem~3.4]{BerKenKur19}.
\end{proof}

\bibliographystyle{plain}
\bibliography{literatur}

\end{document}